\definecolor{webgreen}{rgb}{0,.5,0}
\definecolor{webbrown}{rgb}{.6,0,0}
\newcommand{\seqnum}[1]{\href{http://www.research.att.com/cgi-bin/access.cgi/as/~njas/sequences/eisA.cgi?Anum=#1}
{\underline{#1}}}
\begin{document}
\begin{center}
\vskip 1cm{\Large \bf On the enumeration of three-rowed 
standard Young tableaux of skew shape 
in terms of Motzkin numbers } \vskip 1cm
\large Jong Hyun Kim \\
Department of Mathematics\\
Brandeis University\\
Waltham,  MA 02454-9110 \\
USA \\
\href{mailto:jhkim@brandeis.edu}{\tt jhkim@brandeis.edu} \\
\end{center}

\vskip .2 in
\begin{abstract}
 The enumeration of standard Young
tableaux (SYTs) of shape $\lambda$ 
can be easily
computed by the hook-length formula. 
In $1981$, Amitai Regev proved that the number of SYTs having at most three rows with $n$ entries equals the $n$th Motzkin number $M_n$.
In $2006$, Regev conjectured that the total number of SYTs of skew shape $\lambda/(2,1)$ over all partitions $\lambda$ having at most three parts with $n$ entries is the difference of two Motzkin numbers, $M_{n-1}-M_{n-3}$. Ekhad and Zeilberger proved Regev's conjecture using a computer program.
In his paper \cite{E}, S.-P. Eu found a bijection between Motzkin paths and SYTs of skew shape with at most three rows to prove Regev's conjecture, and Eu also indirectly showed that for the fixed $\mu=(\mu_1,\mu_2)$ the number of SYTs of skew shape $\lambda/ \mu $ over all partitions $\lambda$ having at most three parts can be expressed as a linear combination of the Motzkin numbers. 
In this paper, we will find an explicit formula for the generating function for the general case: for each partition $\mu$ having at most three parts the generating function gives a formula for the coefficients of the linear combination of Motzkin numbers. We will also show that these generating functions are unexpectedly related to the Chebyshev polynomials of the second kind.
\end{abstract}

\newtheorem{theorem}{Theorem}[section]
\newtheorem{lemma}{Lemma}[section]

\section{Introduction}

A {\it partition} $\lambda$ of a nonnegative integer $n$ is a weakly decreasing sequence
\[\lambda=(\lambda_1, \lambda_2, \ldots, \lambda_l)\] such that $\sum_{i=1}^l \lambda_i=n$ and $\lambda_l >0$. The integers $\lambda_i$ are called {\it parts}.
The number of parts of $\lambda$ is the {\it length} of
$\lambda$, denoted $l(\lambda)$. We denote Par$(n)$ to be the
set of all partitions of $n$, with Par$(0)$ consisting of the
empty partition $\emptyset$, and we let
\[\textrm{Par} := \cup_{n \ge 0} \textrm{ Par}(n).\] If $\lambda \in
\textrm{Par}(n)$, then we also write $\lambda \vdash n$ or $|\lambda|=n$.
Any partition $\lambda$ can be identified with its {\it Young diagram}, which is a collection of boxes arranged in
left-justified rows with the $i$th row containing $\lambda_i$ boxes for $1 \le i \le l(\lambda)$. If $\lambda$ and $\mu$ are partitions such that $\mu_i \le \lambda_i$ for all $i$, we write $\mu \subset \lambda$. Let us assume $|\lambda|=n$ and $\mu \subset \lambda$ throughout this paper.

A {\it skew shape} $\lambda / \mu$ is a pair of partitions $(\lambda,\mu)$ such that the Young diagram of $\lambda$ contains the Young diagram of $\mu$. 
If $\mu = \emptyset$, then we assume that $\lambda / \mu = \lambda$.
A {\it skew Young diagram} of shape $\lambda/ \mu$ is a Young diagram of $\lambda$ with a Young diagram of $\mu$ removed from it.
A {\it standard Young tableau} (SYT) of skew shape $\lambda / \mu$ is obtained by taking a skew Young diagram of shape $\lambda / \mu$ and writing numbers $1, 2,\ldots, n$ in the $n$ boxes of this diagram such that the numbers
increase from left to right in each row and from top to bottom
down in each column. A Young tableau is called {\it semistandard} (SSYT) if the entries weakly increase from left to right in each row and strictly increase from top to bottom down in each column. The size of a SSYT is the number of its entries.
As shown in Figure below, $(a)$ is a Young diagram of shape $(4,3,1)$, $(b)$ is a SYT of shape $(4,3,1)$, and $(c)$ is a SYT of skew shape $(4,3,1)/(2,1)$, respectively.
\begin{tabbing}
a\qquad\qquad\qquad \qquad\qquad\qquad\=a \qquad \qquad\qquad\qquad\qquad\qquad\= \kill
 {\includegraphics[width=1.3in]{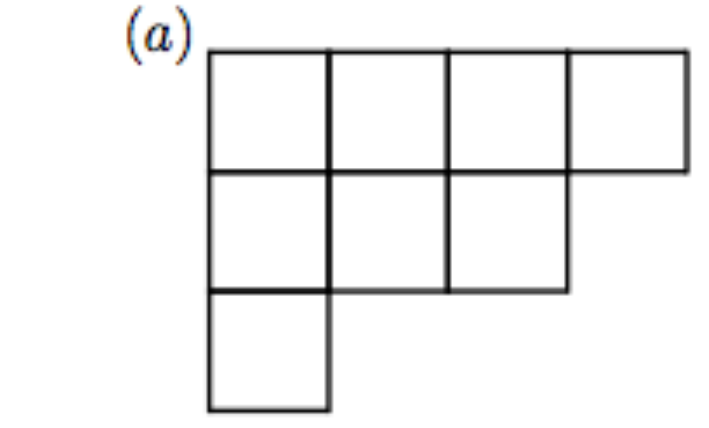}} \>
 {\includegraphics[width=1.3in]{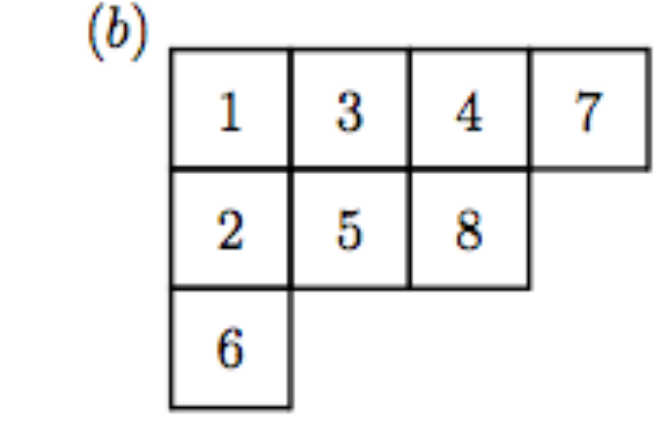}}   \>
 {\includegraphics[width=1.3in]{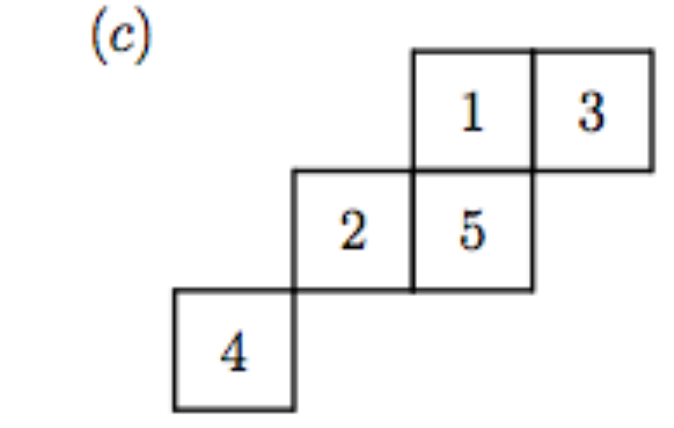}}  \\
\end{tabbing}
\vspace{-.5cm}

Now let us review several definitions for symmetric functions \cite{EC2}. Let $x=(x_1,x_2,\ldots)$ be a set of indeterminates. For $n \in \mathbb{N}$, a {\it homogeneous
symmetric function of degree} $n$ over the complex number field
$\mathbb{C}$ is a formal power series
\[f(x)=\sum_\alpha c_\alpha x^\alpha, \] where $\alpha$ ranges
over all weak compositions $\alpha=(\alpha_1,\alpha_2, \ldots)$ of
$n$, $c_\alpha \in \mathbb{C}$, $x^\alpha$ stands for the monomial
$x_1^{\alpha_1}x_2^{\alpha_2}\cdots$, and
$f(x_{\omega(1)},x_{\omega(2)},\ldots)=f(x_1,x_2,\ldots)$ for
every permutation $\omega$ of the positive integers $\mathbb{P}$.
Note that a symmetric function of degree zero is just a complex
number.

 The {\it monomial symmetric functions} $m_\lambda$ for
$\lambda \in $ Par are defined by
\[m_\lambda := \sum_\alpha x^\alpha,\]
where the sum ranges over all distinct permutations
$\alpha=(\alpha_1,\alpha_2,\ldots)$ of the entries of the vector
$\lambda=(\alpha_1,\alpha_2,\ldots)$.

The {\it complete homogeneous symmetric functions} $h_\lambda$ for
$\lambda \in $ Par are defined by the formulas
\[h_n := \sum_{\lambda \vdash n} m_\lambda = \sum_{i_1 \le \cdots \le i_n}x_{i_1}\cdots x_{i_n}, \textrm{ for } n \ge 1
\qquad (\textrm{with }h_0=1)\]
\[h_\lambda = h_{\lambda_1} h_{\lambda_2} \cdots \qquad \textrm{ if } \lambda=(\lambda_1,\lambda_2,\ldots ).\]

Let $\lambda / \mu$ be a skew shape. The {\it skew Schur function
$s_{\lambda / \mu}=s_{\lambda / \mu}(x)$ of $\lambda / \mu$}
in the variables $x=(x_1,x_2, \ldots)$ is the sum of monomials
\[s_{\lambda / \mu}(x):=\sum_{T}x^T=\sum_{T}x_1^{t_1}x_2^{t_2}\cdots ,\] summed over all SSYTs $T$ of skew shape $\lambda / \mu$ where each $t_i$ counts the occurrences of the number $i$ in the SSYT $T$. If $\mu = \emptyset$, (that is, $\lambda / \mu = \lambda$), then we call $s_\lambda(x)$ the
Schur function of shape $\lambda$. For example, the SSYTs of shape $(2,1)$ with largest part at most three are given by
\[{{\includegraphics[width=5in]{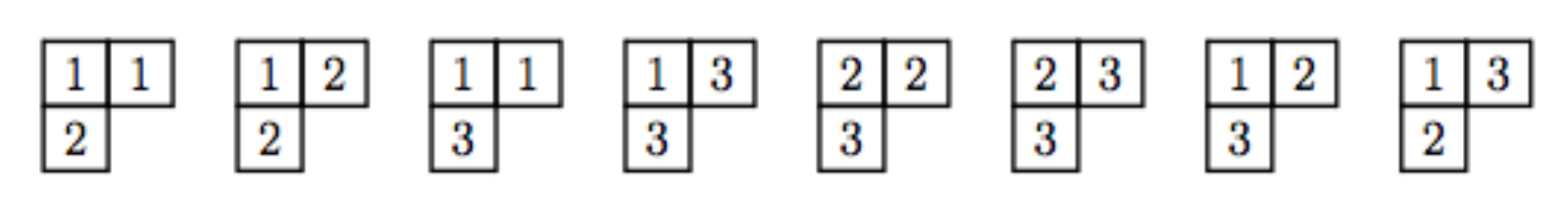}}}.\]
So, we have
        \begin{align*}S_{(2,1)}(x_1,x_2,x_3)&=x_1^2x_2+x_1x_2^2+x_1^2x_3+x_1x_3^2+x_2^2x_3+x_2x_3^2+2x_1x_2x_3\\
        &=m_{(2,1)}(x_1,x_2,x_3)+2m_{(1,1,1)}(x_1,x_2,x_3).
                \end{align*}

\section{About standard Young tableaux having at most three rows}
In their paper \cite{GH}, Gordon and Houten showed that the sum of Schur functions of skew shape $\lambda / \mu$, for fixed $\mu$ over all partitions $\lambda$ with at most a fixed
number of rows, can be expressed as a Pfaffian of a matrix of
complete homogeneous symmetric functions. Before we state their
results, let us define the functions $h$, $g_i$, and $f_j$ used in
them.

For $i \ge 0$ and $ j \ge 1$, let us define $h$, $g_i$, and $f_j$ by
{\allowdisplaybreaks
        \begin{align*}
            h &:= \sum_{n=0}^\infty h_n  = \sum_{n=0}^\infty \sum_{\lambda \vdash n} m_\lambda ,\\
            g_i &:= \sum_{n=0}^\infty h_n h_{n+i} = \sum_{n=0}^\infty \sum_{\lambda_1 \vdash n} m_{\lambda_1} \sum_{\lambda_2 \vdash n+i} m_{\lambda_2},\\
            \intertext{and}
            f_j &:= g_0 +2(g_1 + \cdots + g_{j-1}) + g_j .
        \end{align*}
        }
Gordon and Houten proved that
\begin{equation*}
       \sum_{l( \lambda )  \le 2m} s_{\lambda / \mu} = \textrm{Pf} (D_{2m}),
           \end{equation*}
where Pf denotes the Pfaffian and $D_{2m}=( f_{\lambda_i -\mu_j+j-i}) _{1 \leq i,j \leq 2m}$, and     
\begin{equation} \label{eq:721}
        \sum_{l( \lambda )  \le 2m+1} s_{\lambda / \mu}  = \textrm{Pf}\left(%
            \begin{array}{cc}
            0 & H \\
            -H^t & D_{2m} \\
            \end{array}%
            \right),
    \end{equation}
where the matrix is obtained by bordering $D_{2m}$
 with a row $H$ of $h$'s, a column $-H^t$
(transpose of $-H$) of $-h$'s, and a zero.

For $\mu = \emptyset$, Gordon
\cite{G} showed that
        \begin{equation} \label{eq:722}
        \sum_{l( \lambda )  \le 2m} s_{\lambda} =
            \det (g_{i-j}+g_{i+j-1})_{1 \le i,j \le m},
        \end{equation}
and
    \begin{equation}\label{eq:723}
        \sum_{l( \lambda )  \le 2m+1} s_{\lambda }  =
            h \det (g_{i-j}-g_{i+j})_{1 \le i,j \le m}.
    \end{equation}

For the case $m=1$, identity $(\ref{eq:722})$ reduces
to
        \begin{equation} \label{eq:724}
        \sum_{l( \lambda ) \le 2} s_{\lambda} =
            g_0 + g_1.
        \end{equation}
We can directly prove identity $(\ref{eq:724})$ by considering
the two cases of partitions of even or odd numbers and then by applying
Pieri's rule to each case. For $m=1$, identity $(\ref{eq:723})$
reduces to
        \begin{equation*}
        \sum_{l( \lambda ) \le 3} s_{\lambda} =
           h(g_0 - g_2),
        \end{equation*}
which can also be proved by applying Pieri's rule to expand $hg_0$ and $hg_2$.

To count SYTs of skew shape $\lambda / \mu$,
we need to find the coefficient of $x_1x_2\cdots x_{n-|\mu|}$ in $s_{\lambda / \mu}$. Now let us transform symmetric functions
into formal power series by applying the map $\theta$ from the algebra of symmetric
functions to formal power series in $x$, which is defined by
        \begin{align*}
            \theta(m_{\lambda}) &=
         \left\{
        \begin{array}{l l}
        x^r/r! , &\, \textrm{if} \,\, \lambda=(1^r) \textrm{ for some $r$}\\
        0, & \,  \textrm{otherwise} \\
        \end{array} \right.
        \end{align*} and extended by linearity.
Then we can easily show that the map $\theta$ is a homomorphism with the property
\[          \theta(h_n) = \frac{x^n}{n!} . \]
Let us apply the linear map $\theta$ to $g_i$. For $i \ge 0$, we have
        \begin{align*}
            \theta(g_i) &= \theta \Big(\sum_{n=0}^\infty h_n h_{n+i} \Big)
            \\
            &= \sum_{n=0}^\infty \theta(h_n) \theta(h_{n+i})
            \\
            &= \sum_{n=0}^\infty {x^n \over n!}{x^{n+i} \over (n+i)! }
            \\
            &= \sum_{n=0}^\infty  {2n +i \choose n}{x^{2n+i} \over (2n+i)! }.        
        \end{align*}

Now we want to convert the generating function $\theta(g_i)$ to an ordinary
generating function. To do this, we need the following lemma.

\begin{lemma} \label{lemma:72}
Define $L$ to be the linear map from the
algebra of formal power series $\mathbb{C}[[x]]$ to itself defined
by $\frac{x^n}{n!} \mapsto x^n$, extended by linearity. Then we have
\[ L(e^x f(x))={1 \over 1-x} F\Big({x \over 1-x}\Big), \textrm{ where }
F(x)=L\big(f(x)\big). \]
\end {lemma}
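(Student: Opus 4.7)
The plan is to reduce to monomials via linearity and then compute both sides directly. Writing $f(x) = \sum_{n \ge 0} a_n x^n/n!$, we have by definition $F(x) = L(f(x)) = \sum_{n\ge 0} a_n x^n$. Since both sides of the claimed identity depend linearly on $f$ (the right-hand side through $F$), it suffices to verify the identity when $f(x) = x^n/n!$ for each $n \ge 0$.

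So I would first fix $n$, take $f(x) = x^n/n!$, and compute
\[
e^x f(x) \;=\; \sum_{k \ge 0} \frac{x^{n+k}}{n!\,k!} \;=\; \sum_{k \ge 0} \binom{n+k}{n} \frac{x^{n+k}}{(n+k)!}.
\]
Applying $L$ term by term (which replaces each $x^m/m!$ by $x^m$) then gives
\[
L(e^x f(x)) \;=\; \sum_{k \ge 0} \binom{n+k}{n} x^{n+k} \;=\; \frac{x^n}{(1-x)^{n+1}},
\]
by the standard identity $\sum_{k\ge 0} \binom{n+k}{n} x^k = (1-x)^{-(n+1)}$. On the other hand, with $F(x) = x^n$,
\[
\frac{1}{1-x} F\!\left(\frac{x}{1-x}\right) \;=\; \frac{1}{1-x}\left(\frac{x}{1-x}\right)^{n} \;=\; \frac{x^n}{(1-x)^{n+1}},
\]
which matches. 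Extending by linearity in $n$ yields the result for general $f$.

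There is no serious obstacle, only a small bookkeeping point: I need to justify the term-by-term manipulations for formal power series. This is fine because the substitution $x \mapsto x/(1-x)$ is a valid formal substitution (the inner series has zero constant term), so $F(x/(1-x))$ is a well-defined element of $\mathbb{C}[[x]]$, and the sum $\sum_n a_n x^n/(1-x)^{n+1}$ converges in the $(x)$-adic topology since the coefficient of $x^m$ receives contributions only from $n \le m$. This ensures the linearity extension is legitimate and completes the proof.
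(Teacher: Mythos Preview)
Your proof is correct and follows essentially the same route as the paper: both reduce via linearity to the monomial case $f(x)=x^n/n!$, expand $e^x x^n/n!$ as $\sum_{k\ge 0}\binom{n+k}{n}x^{n+k}/(n+k)!$, apply $L$, and invoke $\sum_{k\ge 0}\binom{n+k}{n}x^k=(1-x)^{-(n+1)}$ to identify the result with $\frac{1}{1-x}F\bigl(\frac{x}{1-x}\bigr)$. Your added remark about formal convergence is a nice bonus, but otherwise the arguments are the same.
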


\begin{proof}
Let $f(x)=\sum_{n=0}^\infty a_n {x^n \over n!}$. Then $F(x)=L\big(f(x)\big)
=\sum_{n=0}^\infty a_n x^n $ and
    \begin{align*}
    L(e^x f(x)) 
     &=  \sum_{n=0}^\infty a_n L\Big(e^x{x^n \over n!} \Big)\quad \textrm{by linearity}\\
     &=  \sum_{n=0}^\infty a_n L\Big(\sum_{k=0}^\infty {n+k \choose k} {x^{n+k} \over (n+k)!} \Big)\\
     &=  \sum_{n=0}^\infty a_n x^n \sum_{k=0}^\infty {n+k \choose k} x^{k} \\
     &= \sum_{n=0}^\infty a_n {x^n \over (1-x)^{n+1}} \\
     &= {1 \over 1-x} F\Big({x \over 1-x}\Big).\qedhere
    \end{align*} 
\end{proof}

Let $c(x)$ be the Catalan number generating function
\[ c(x)=\sum_{n=0}^\infty C_n x^n=\sum_{n=0}^\infty \frac1{n+1}\binom{2n}{n} x^n=\frac{1-\sqrt{1-4x}}{2x},\]
and $m(x)$ be the Motzkin number generating function
\[ \displaystyle m(x)=\sum_{n=0}^\infty M_n x^n = {1-x-\sqrt{1-2x-3x^2} \over 2x^2},\] where $M_n=\sum_{k=0}^{\left\lfloor n/2 \right\rfloor}{n !}/\big({k!(k+1)!(n-2k)!}\big)$ is the $n$th Motzkin number.
From the well-known fact
    \begin{equation} \label{eq:725}
        \sum_{n=0}^\infty  {2n + s \choose n} x^{n} = {c(x)^s \over \sqrt{1-4x}},
    \end{equation} we
can express the Motzkin number generating function in terms of the
Catalan number generating function:
\[ {1 \over 1-x} c \left( {x^2 \over
(1-x)^2} \right) =m(x). \]

Let $\Psi :=L \circ \theta $. Then $\Psi$ is a linear map from the algebra of symmetric functions to the algebra of formal power series $\mathbb{C}[[x]]$.

Now let us compute $\Psi(g_i)$ for $i \ge 0$.
        \begin{align} \label{eq:726}
       \Psi(g_i) &= L(\theta(g_i) )  \nonumber \\
             &= L\left(  \sum_{n=0}^\infty
            {2n +i \choose n}{x^{2n+i} \over (2n+i)! } \right)
            \nonumber \\
            &= \sum_{n=0}^\infty  {2n +i \choose n} x^{2n+i}
            \nonumber \\
            &= {x^ic(x^2)^i \over \sqrt{1-4x^2}},
        \end{align}
where the last equation follows from identity $(\ref{eq:725})$.

For $j, k \ge 1$, let us consider a partition $\mu=(\mu_1,\mu_2)$ having $\mu_1= j+k-2$ and $\mu_2= j-1$ in equation $(\ref{eq:721})$.
Then equation $(\ref{eq:721})$ reduces to
    \begin{equation}  \label{eq:727}
     \displaystyle \sum_{ l(\lambda) \le 3} s_{\lambda/\mu} =h(f_j+f_k-f_{j+k}).
     \end{equation}

Let $G_3(\mu_1, \mu_2)$ be the sum $(\ref{eq:727})$. That is, 
    \begin{equation}  \label{eq:728}
     G_3(j+k-2,j-1) =h(f_j+f_k-f_{j+k}).
     \end{equation}

Now let us first consider the case $\mu=(k-1,0)$, that is, $\mu=(k-1)$.
Letting $j=1$ in equation $(\ref{eq:728})$, we can find the
number of SYTs of
skew shape $\lambda / (k-1)$ over all partitions $\lambda$ of $n$ having at most three parts, filled with the numbers
$1,2,\ldots,n-k+1$. Since
\[f_n =g_0 +2(g_1+\cdots +g_{n-1})+g_n \, \textrm{ for } n>0, \]
we have \[ G_3(k-1,0)=h(g_0+g_1-g_k-g_{k+1}). \]

           By equation $(\ref{eq:726})$ and Lemma $\ref{lemma:72}$, we have
{\allowdisplaybreaks
        \begin{align} \label{eq:729}
            \Psi(h g_k) &= L \big( \theta(h g_k) \big) \nonumber \\
             &= L \big( \theta(h) \theta(g_k) \big) \nonumber \\
             &= L \big( e^x \theta(g_k) \big) \nonumber \\
            &= \Big({x \over 1-x} \Big )^k c\Big({x^2 \over (1-x)^2}\Big)^k {1 \over \sqrt{1-2x-3x^2}}\nonumber \\
            &= {x^km(x)^k \over \sqrt{1-2x-3x^2}}.
         \end{align}
}

To simplify the term that we computed above, let us define $\alpha_k$ by
\[ \alpha_k:= \Psi(hg_k), \textrm{ for all } k \ge 0.\] Then we have $\alpha_k= x^km(x)^k / \sqrt{1-2x-3x^2}$.

To find the number of SYTs of skew shape $\lambda/(k-1)$ over all partitions $\lambda$ of $n$ having at most three parts, filled with the numbers $1,2,\ldots,n-k+1$, let us apply $\Psi$ to $G_3(k-1,0)$.
Then by linearity we deduce
\begin{align}\label{eq:730}
           \Psi \big( G_3(k-1,0)\big) &=
           \alpha_0 +\alpha_1 -\alpha_k -\alpha_{k+1} \nonumber  \\
            &= {1+xm(x)-x^km(x)^k-x^{k+1}m(x)^{k+1} \over \sqrt{1-2x-3x^2}}\nonumber \\
            &= {\big( 1+xm(x)\big)\big(1-x^km(x)^k\big) \over \sqrt{1-2x-3x^2}} \nonumber \\
            &= {m(x) \over 1-x^2m(x)^2}  \big( 1+xm(x)\big)\big(1-x^km(x)^k\big) \nonumber \\
            &= {m(x) \over 1-xm(x)}  (1-x^km(x)^k ).
         \end{align}

For example, consider the case $k=1$ in $(\ref{eq:730})$, that is, $\mu= \emptyset$. Then we know that the number of SYTs of shape
$\lambda$ over all partitions $\lambda$ of $n$ having at most three parts, filled with the numbers
$1,2,\ldots,n$, is equal to the
$n$th Motzkin number since
\[ \Psi\big(G_3(0,0)\big) = m(x).\]

Consider the case $k=2$ in $(\ref{eq:730})$, that is, $\mu=(1)$. Since the
Motzkin number generating function $m(x)$ satisfies the functional
identity \[ m(x)= 1+xm(x)+x^2m(x), \] we deduce
        \begin{align*}
           \Psi\big(G_3(1,0)\big) &= m(x)\big(1+xm(x) \big)   \\
            &= {m(x)-1 \over x}.
         \end{align*}
This tells us that the number of SYTs of skew shape
$\lambda / (1)$ over all partitions $\lambda$ of $n$ having at most three parts, filled with the numbers $1,2,\ldots,n-1$ is equal
to the $n$th Motzkin number for $n \ge 1$.

Now let us find the generating function for $\Psi \big(
G_3(k-1,0)\big)$ for $k \ge 1$. From $(\ref{eq:730})$,
we have
{\allowdisplaybreaks
        \begin{align}\label{eq:731}
         \sum_{k=1}^\infty ( \alpha_0 +\alpha_1 -\alpha_k -\alpha_{k+1} ) y^k
         &= \sum_{k=1}^\infty {m(x) \over 1-xm(x)}  (1-x^km(x)^k
         ) y^k \nonumber \\
         &= {m(x) \over 1-xm(x)}  \Big({1 \over 1-y}-{1 \over 1-xym(x)} \Big) \nonumber \\
         &= -{y^2 \over (1-y)(x+xy-y+xy^2)} \nonumber \\
         & \qquad + {xy \over (1-y)(x+xy-y+xy^2)}m(x),
         \end{align}
         }
where the last equation follows by rationalizing the
denominator. 
By factoring the denominator of the first expression in the right
side of equation $(\ref{eq:731})$, we have
        \begin{equation}\label{eq:732}
        -{y^2 \over (1-y)(x+xy-y+xy^2)}= -{y^2 \over x(1-y) (1+y-{y / x}+y^2)},
       \end{equation}
so when expanded in powers of $y$, the function
$(\ref{eq:732})$ has only negative powers of $x$. However, the left
side of equation $(\ref{eq:731})$ has no negative powers of
$x$. So we can conclude that the function $(\ref{eq:732})$ must be
cancelled with negative powers of $x$ from the second expression
in the right side of equation $(\ref{eq:731})$.
Therefore, we know that the generating function for $\Psi \big(
G_3(k-1)\big)$ is the part of
       \[ {xy \over
(1-y)(x+xy-y+xy^2)}m(x),
       \]
consisting of nonnegative powers of $x$. 

Let
        \begin{equation}\label{eq:733}
        \displaystyle R(x,y):= {xy \over
(1-y)(x+xy-y+xy^2)}.
       \end{equation}
By factoring \[ \displaystyle {xy \over
(x+xy-y+xy^2)}={y \over 1+(1-x^{-1})y+y^2},\] we can see that the coefficient of $y^n$ in $(\ref{eq:733})$ is a polynomial in $1/x$. So there are polynomials $r_k(x)=\sum_i r_{k,i}x^i$ such that the function $(\ref{eq:733})$ can be written as
        \begin{equation}\label{eq:7034}
        \displaystyle {y \over (1-y)\left(1+(1-x^{-1})y+y^2\right)}
        =\sum_{k=1}^\infty r_k\left({1 \over x}\right)y^k.
       \end{equation}
       
Now we need to compute the coefficient of $x^{n-(k-1)}y^k$ in the expression $R(x,y)m(x)$ to find the number of SYTs of skew shape $\lambda / (k-1)$ over all partitions $\lambda$ of $n$ having at most three parts, filled with the numbers $1,2,\ldots,n-(k-1)$. By $(\ref{eq:7034})$, we have 
        \begin{align*}
         [x^{n-k+1}y^k] R(x,y)m(x)
         &= [x^{n-k+1}] r_k\left({1 \over x}\right) m(x) \\
         & = [x^{n-k+1}] \sum_{i=0}^\infty M_i x^i r_k\left({1 \over x}\right) \\
         &= [x^{n-k+1}] \sum_{i,j=0}^\infty M_i x^i r_{k,j} x^{-j} \\
         &= \sum_{i} M_i r_{k,i-n+k-1}  \\
         &= \sum_{i} M_{i+n-k+1} r_{k,i}. 
         \end{align*}
So, we summarize with the following theorem.
\begin{theorem}\label{thm:71}
For $k \ge 1$, the number of standard Young tableaux of skew shape $\lambda / (k-1)$ over all partitions $\lambda$ of $n$ having at most three parts, filled with the numbers $1,2,\ldots,n-k+1$ can be written as the linear combination of Motzkin numbers
\[ \sum_{i}r_{k,i} M_{i+n-k+1},\]
where the number $M_n$ is the $n$th Motzkin number and the coefficients $r_{k,i}$ are defined by 
\[ \sum_{k=1}^\infty r_k(x)y^k= \sum_{k=1}^\infty \sum_{i=0}^{k-1} r_{k,i}x^iy^k= {y \over (1-y)\left(1+(1-x)y+y^2\right)}.\]
\end{theorem}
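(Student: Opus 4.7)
The plan is to assemble the pieces already worked out in the preceding calculation. The map $\Psi = L \circ \theta$ sends a skew Schur function $s_{\lambda/\mu}$ to a formal power series whose coefficient of $x^{n-|\mu|}$ counts the SYTs of shape $\lambda/\mu$ filled with $1,\ldots,n-|\mu|$, because $\theta$ picks out $m_{(1^{r})}/r!$ and $L$ clears the factorial. Thus the number the theorem asks for is exactly $[x^{n-k+1}]\,\Psi\bigl(G_3(k-1,0)\bigr)$.

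First I would collect these into the bivariate generating function $\sum_{k\ge 1}\Psi\bigl(G_3(k-1,0)\bigr)\,y^k$ and invoke the closed form $(\ref{eq:730})$. The partial-fraction manipulation in $(\ref{eq:731})$ splits this into the piece $(\ref{eq:732})$, which is a Laurent series with only negative powers of $x$, plus $R(x,y)\,m(x)$. Since each $\Psi\bigl(G_3(k-1,0)\bigr)$ lies in $\mathbb{C}[[x]]$, the negative-power piece must cancel against the negative-power part of $R(x,y)\,m(x)$, so the quantity sought equals $[x^{n-k+1}y^k]\,R(x,y)\,m(x)$.

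Next, I would use the expansion $(\ref{eq:7034})$: writing $R(x,y)=\sum_{k\ge 1} r_k(1/x)\,y^k$, where $r_k(t)=\sum_i r_{k,i}\,t^i$ is the polynomial from the theorem statement (substituting $t \mapsto 1/x$ sends the denominator factor $1+(1-t)y+y^2$ to $1+(1-1/x)y+y^2$, matching $(\ref{eq:7034})$), and inserting $m(x)=\sum_i M_i x^i$ yields
\[ [x^{n-k+1}y^k]\,R(x,y)\,m(x) \;=\; \sum_i M_i\,r_{k,\,i-n+k-1} \;=\; \sum_i r_{k,i}\,M_{i+n-k+1},\]
which is the claimed linear combination.

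The main technical point I expect to need is the degree bound $\deg r_k \le k-1$, so that the sum above is finite and indexed by $0 \le i \le k-1$. This follows from the two-term recurrence built into $1+(1-t)y+y^2$: the coefficient of $y^k$ in $\bigl(1+(1-t)y+y^2\bigr)^{-1}$ is a polynomial in $1-t$ of degree at most $k$ (a Chebyshev-type family, foreshadowing the connection mentioned in the abstract), and convolution with the coefficient sequence of $y/(1-y)$ — which partial-sums while absorbing one factor of $y$ — preserves this bound at degree $k-1$. Once this bookkeeping is verified, the theorem follows immediately from the computation above.
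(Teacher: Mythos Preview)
Your proposal is correct and follows the paper's approach essentially verbatim: the paper's ``proof'' is precisely the computation in equations $(\ref{eq:730})$--$(\ref{eq:7034})$ and the coefficient extraction immediately preceding the theorem statement, all of which you cite and reproduce. The one small addition you make is the explicit justification of the degree bound $\deg r_k \le k-1$, which the paper asserts in the theorem statement (via the index range $0\le i\le k-1$) but does not argue separately; your partial-sum argument for it is fine.
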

Table $\ref{T:71}$ shows the coefficients $r_{k,i}$ for
$k$ from $1$ to $8$.
{ \begin{table}
 \centering \begin{tabular}{c|crrrrrrrr}
        $k \setminus i$ & 0 & 1 & 2 & 3 & 4 & 5 & 6 & 7 \\
        \hline
        $0$ & $0$  &   &   &   &   &   &   &      \\
        $1$ & $1$  &   &   &   &   &   &   &     \\
        $2$ & $0$  & $1$  &   &   &  &  &  &    \\
        $3$ & $0$ & $-1$ & $1$ &  &  &  &  &    \\
        $4$ & $1$ & $0$ & $-2$ & $1$  &  &  &  &    \\
        $5$ & $0$  & $2$ & $1$  & $-3$ & $1$  &  &  &    \\
        $6$ & $0$  & $-2$  & $3$ & $3$ & $-4$  & $1$  &  &    \\
        $7$ & $1$  & $0$  & $-6$  & $3$  & $6$  & $-5$  & $1$  &    \\
        $8$ & $0$  & $3$  & $3$  & $-12$  & $1$  & $10$  & $-6$  & $1$\\
  \end{tabular}
\caption{The values of $r_{k,i}$}\label{T:71}
\end{table} }

\subsection{Generalization of Theorem $\ref{thm:71}$}
Now we want to consider a generalization of Theorem $\ref{thm:71}$; for a partition $\mu$ having at most three rows, we want to express the number of SYTs of skew shape $\lambda / \mu$ over all partitions $\lambda$ of $n$ having at most three parts, filled with the numbers
$1,2,\ldots, n- |\mu|$, as a linear combination of Motzkin numbers. It is enough to consider partitions $\mu$ having at most two rows.

First let us compute $\Psi(h f_k)$ for $k>0$. By equation
$(\ref{eq:729})$ we have
{\allowdisplaybreaks
        \begin{align}\label{eq:735}
            \Psi(h f_k) &= L\Big(\theta( h\big(g_0 +2(g_1+\cdots +g_{k-1})+g_k\big) \Big) \nonumber \\
            &=  {1 \over \sqrt{1-2x-3x^2}} \Big( 1+ 2 \big(xm(x)+\cdots+ x^{k-1}m(x)^{k-1} \big) +x^k m(x)^k \Big) \nonumber \\
            &= {1 \over \sqrt{1-2x-3x^2}}  {1-x^{k+1}m(x)^{k+1}+xm(x)-x^km(x)^k \over 1-xm(x)} \nonumber \\
            &= {m(x) \over 1-x^2m(x)^2}  {\big( 1 + xm(x)\big) \big(1-x^{k}m(x)^{k}\big)  \over 1-xm(x)} \nonumber \\
            &= {m(x) \over \big(1-xm(x)\big)^2} \big(1-x^{k}m(x)^{k}\big).
         \end{align}
}

Let $\beta_k:=\Psi(h f_k)$ for $k >0$. Then
by $(\ref{eq:735})$, we can compute the generating function $B(y)$
for $\beta_k$ as the following:
{\allowdisplaybreaks
        \begin{align}\label{eq:736}
         B(y) &=\sum_{k=1}^\infty \beta_ky^k 
		= \sum_{k=1}^\infty {m(x) \over \big(1-xm(x)\big)^2} \big(1-x^{k}m(x)^{k}\big)
         y^k \nonumber \\
         &= {m(x) \over \big(1-xm(x)\big)^2} \Big({1 \over 1-y}-{1 \over 1-xm(x)y} \Big) \nonumber \\
       &= {-y(x^2-x^2 y)m(x) \over (1-y)(1-3x)(x+xy-y+xy^2)}+ {y(x-y+2xy) \over (1-y)(1-3x)(x+xy-y+xy^2)},
         \end{align}
         }
         where the last equation follows by rationalizing the denominator.         
Then the generating function for $\Psi(h f_j)+\Psi(h f_k)-\Psi(h f_{j+k})$ is 
{\allowdisplaybreaks
        \begin{align}\label{eq:737}
         \sum_{j,k=1}^\infty \big(\Psi(h f_j)+ &  \Psi(h f_k)- \Psi(h f_{j+k})\big) y^jz^k \nonumber \\
         &= \sum_{j,k=1}^\infty \big( \beta_j + \beta_k - \beta_{j+k} \big) y^jz^k \nonumber \\
         &= {z \over 1-z}B(y)+ {y \over 1-y}B(z)- \sum_{j,k=1}^\infty \beta_{j+k}y^jz^k \nonumber \\
         &= {z \over 1-z}B(y)+ {y \over 1-y}B(z)- \sum_{n=1}^\infty \beta_{n} {y^nz-yz^n \over y-z} \nonumber \\
         &= {z \over 1-z}B(y)+ {y \over 1-y}B(z)- {1 \over y-z}\Big( zB(y)-yB(z)\Big).
         \end{align}
         }
So, plugging equation $(\ref{eq:736})$ into $(\ref{eq:737})$ gives the generating function for $\Psi(h
f_j)+\Psi(h f_k)-\Psi(h f_{j+k})$, which is equal to
    \begin{equation}\label{eq:738}
    \begin{split}
      {yz(1-yz)m(x) \over
        (1-y) \big(1+(1-1/x)y+y^2 \big) (1-z) \big( 1+(1-1/x)z+ z^2 \big)}  \qquad \qquad &\\
    - {yz(xyz+xy+xz-yz) \over
        (1-y) \left( x+xy-y+xy^2 \right) (1-z) \left( x+xz-z+xz^2 \right)}.& \\
    \end{split}
    \end{equation}

Let 
    \begin{equation}\label{eq:7040}
       T(x,y,z):= {yz(1-yz) \over
        (1-y) \big(1+(1-1/x)y+y^2 \big) (1-z) \big( 1+(1-1/x)z+ z^2 \big) },
       \end{equation}
       and let
           \begin{equation*}
       S(x,y):={y \over
        (1-y)\big( 1+(1-x)y+y^2 \big) }.
       \end{equation*}
       Then by equation $(\ref{eq:7034})$ we have
       \[ S(x,y)=\sum_{k=1}^\infty r_k(x)y^k .\] 
       
We know that when expanded in powers of $y$, the second term in $(\ref{eq:738})$ has only negative powers of $x$. However the left side of equation in $(\ref{eq:737})$ has no negative powers of $x$. So the first term in $(\ref{eq:738})$ must be cancelled with negative powers of $x$ from the second term in $(\ref{eq:738})$. Also, the expression $(\ref{eq:7040})$ has only negative powers of $x$.
Then we simplify the function
        \begin{align}\label{eq:7370}
         T(1/x,y,z)
         &= {yz(1-yz) \over (1-y) \big(1+(1-x)y+y^2\big)(1-z)\big(1+(1-x)z+z^2 \big)} \nonumber \\
         &= yz(1-yz) S(x,y)S(x,z) \nonumber \\
         &= yz(1-yz) \sum_{j=1}^\infty r_j(x)y^j \sum_{k=1}^\infty r_k(x)z^k  \nonumber \\
         &= \sum_{j,k=1}^\infty \left(r_j(x)r_k(x)-r_{j-1}(x)r_{k-1}(x)\right)y^jz^k,
         \end{align} where $r_0(x)=0$.
         
Define the polynomial $r_{j,k}(x)$ by 
 \begin{equation}\label{eq:7041}
T(1/x,y,z)=\sum_{j,k}r_{j,k}(x)y^jz^k.
  \end{equation} 
Then by (\ref{eq:7370}) and (\ref{eq:7041}) we have \[r_{j,k}(x)=r_j(x)r_k(x)-r_{j-1}(x)r_{k-1}(x).\]

Next, let us compute the coefficient of $x^{n-(|\mu_1|+|\mu_2|)}y^jz^k$ in the expression $T(x,y,z)m(x)$. By $(\ref{eq:7041})$, we have 
{\allowdisplaybreaks
        \begin{align*}
         [x^{n-(|\mu_1|+|\mu_2|)}y^jz^k] \, T(x,y,z)m(x)
         &= [x^{n-(|\mu_1|+|\mu_2|)}] \, r_{j,k}\left({1 \over x}\right) m(x) \\
         &= [x^{n-(|\mu_1|+|\mu_2|)}] \sum_{i=0}^\infty M_i x^i r_{j,k}\left({1 \over x}\right) \\
         &= [x^{n-(|\mu_1|+|\mu_2|)}] \sum_{i,l=0}^\infty M_i x^i r_{i,j,k} x^{-l} \\
         &= \sum_{i} M_i r_{i-n+( |\mu_1|+|\mu_2| ),j,k}  \\
         &= \sum_{i} M_{i+n-(|\mu_1|+|\mu_2|)} r_{i,j,k}. 
         \end{align*}
         }
So, we summarize with the following theorem.

\begin{theorem} 
For positive integers $\mu_1$ and $\mu_2$ with $\mu_1 \ge \mu_2$, the number of standard Young
tableaux of skew shape $\lambda / (\mu_1,\mu_2)$ over all partitions $\lambda$ of $n$ having at most three parts, filled with the numbers $1,2,\ldots,n-(\mu_1 +\mu_2)$, can be written as the linear combination of Motzkin numbers
\[\sum_i M_{i+n-(|\mu_1|+|\mu_2|)} r_{i,j,k} \]
where the number $M_n$ is the $n$th Motzkin number, $j=\mu_2 +1$, $k=\mu_1-\mu_2+1$, and  $r_{i,j,k}$ is the coefficient of $x^{i}$ in $r_{j,k}(x)=r_j(x)r_k(x)-r_{j-1}(x)r_{k-1}(x)$ defined in (\ref{eq:7041}).
\end{theorem}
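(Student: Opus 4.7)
The plan is to mimic, in two variables, the derivation of Theorem~\ref{thm:71}, starting from identity (\ref{eq:728}), which rewrites $G_3(\mu_1,\mu_2) = h(f_j + f_k - f_{j+k})$ under the substitution $j = \mu_2 + 1$, $k = \mu_1 - \mu_2 + 1$. I would first apply $\Psi$ termwise and use the already-established evaluation (\ref{eq:735}) of $\beta_k = \Psi(hf_k)$, together with its univariate generating function $B(y) = \sum_{k\ge 1} \beta_k y^k$ from (\ref{eq:736}).

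Next I would assemble the bivariate generating function $\sum_{j,k\ge 1} \Psi(G_3(\mu_1,\mu_2))\, y^j z^k$. The only nontrivial piece is $\sum_{j,k \ge 1} \beta_{j+k}\, y^j z^k$, which collapses to $(zB(y) - yB(z))/(y-z)$ via the telescoping identity $y^n z - y z^n = (y-z)\, yz\, (y^{n-2} + y^{n-3} z + \cdots + z^{n-2})$. This yields (\ref{eq:737}); substituting the explicit $B(y)$ from (\ref{eq:736}) and simplifying produces the two-term decomposition (\ref{eq:738}).

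The crux is then the positive/negative $x$-power separation. The second term in (\ref{eq:738}) has only negative powers of $x$ when expanded as a power series in $y,z$, and the function $T(x,y,z)$ in (\ref{eq:7040}) also involves only negative powers of $x$ through its $1-1/x$ denominator factors. Since the original bivariate generating function clearly has no negative powers of $x$, the nonnegative-$x$-power part must equal the nonnegative-$x$-power part of $T(x,y,z)\, m(x)$. To extract this part I would substitute $x \mapsto 1/x$ and recognize the factorization $T(1/x,y,z) = yz(1-yz)\, S(x,y)\, S(x,z)$ recorded in (\ref{eq:7370}); reading off the coefficient of $y^j z^k$ gives the identity $r_{j,k}(x) = r_j(x)r_k(x) - r_{j-1}(x)r_{k-1}(x)$. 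A direct convolution of $m(x) = \sum_i M_i x^i$ with $r_{j,k}(1/x) = \sum_i r_{i,j,k}\, x^{-i}$, followed by extraction of the monomial $x^{n-(\mu_1+\mu_2)} y^j z^k$, then produces $\sum_i M_{i + n - (\mu_1+\mu_2)}\, r_{i,j,k}$.

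The main obstacle I anticipate is the clean justification of the positive/negative $x$-power split: one must verify that every term being discarded in the passage from (\ref{eq:738}) to $T(x,y,z)\, m(x)$ actually lies in $x^{-1}\, \mathbb{C}[[x^{-1}, y, z]]$, so that its product with the positive-$x$-power series $m(x)$ cannot contribute to the monomial $x^{n-(\mu_1+\mu_2)} y^j z^k$ for $n \ge \mu_1+\mu_2$. Once this bookkeeping is settled, the remaining steps run exactly parallel to the passage from (\ref{eq:730}) to (\ref{eq:7034}) in the one-variable setting and amount to routine algebra.
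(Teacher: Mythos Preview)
Your proposal is correct and follows essentially the same route as the paper: the derivation there also starts from (\ref{eq:728}), evaluates $\beta_k=\Psi(hf_k)$ via (\ref{eq:735}), forms the bivariate generating function (\ref{eq:737}) using $\sum_{j,k\ge1}\beta_{j+k}y^jz^k=(zB(y)-yB(z))/(y-z)$, rationalizes to (\ref{eq:738}), discards the purely negative-$x$-power piece to isolate $T(x,y,z)m(x)$, factors $T(1/x,y,z)=yz(1-yz)S(x,y)S(x,z)$ to obtain $r_{j,k}(x)=r_j(x)r_k(x)-r_{j-1}(x)r_{k-1}(x)$, and then extracts the coefficient of $x^{n-(\mu_1+\mu_2)}y^jz^k$. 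The only point you flag as delicate---the justification that the discarded terms contribute nothing in nonnegative powers of $x$---is treated in the paper with exactly the same level of informality you describe.
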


\subsection{Focus on the polynomial $r_n(x)$} Now we study the polynomial $r_n(x)$. From $(\ref{eq:7034})$, we have  
       \begin{equation}\label{eq:7035}
       \sum_{n=0}^\infty r_n(x)y^n= {y \over (1-y)\left(1+(1-x)y+y^2\right)}.
       \end{equation}
       
Define polynomials $q_n(x)$ by
    \begin{equation}\label{eq:741}
       \sum_{n=0}^\infty q_n(x)z^n = {1 \over 1-xz+z^2}.
       \end{equation}
Then $q_n(x)=U_n(x/2)$ where $U_n(x)$ for all $n\geq0$ is the Chebyshev polynomial of the second kind (\seqnum{A093614}), which can be defined by the generating function
    \begin{equation*}
         \frac{1}{1-2xz+z^2} = \sum_{n=0}^{\infty}U_n(x)z^n.
    \end{equation*}
    
Now we are trying to find a formula for $r_n(x)$ in terms of the Chebyshev polynomial of the second kind. Then by $(\ref{eq:7035})$ we have
            \begin{equation}\label{eq:734}
       \sum_{n=0}^\infty r_n(x+1)y^n= {y \over (1-y)(1-xy+y^2)}.
       \end{equation}
So, from $(\ref{eq:741})$ and $(\ref{eq:734})$, we deduce 
           \begin{equation*}
       r_n(x+1) =\sum_{i=0}^{n-1} U_i(x/2).
       \end{equation*} 
Then we can rewrite $(\ref{eq:734})$ as the sum of an even function 
and an odd function of $x$:
        \begin{align*}
           { y  \over (1-y)(1-xy+y^2 )}
            &= { y(1+y^2)\over (1-y)(1-xy+y^2)(1+xy+y^2) } \\
            & \qquad +  { xy^2\over (1-y)(1-xy+y^2)(1+xy+y^2) }.
        \end{align*}
        
Let 
        \begin{align*}
           P_e(x,y) &:={ y(1+y^2) \over (1-y)(1-xy+y^2)(1+xy+y^2) } 
           \intertext{and}
          P_o(x,y) &:={ xy^2 \over  (1-y)(1-xy+y^2)(1+xy+y^2) }.
       \end{align*}
        
Then the following lemma shows that we can express $P_e(x,y)$ and $P_o(x,y)$ as the Chebyshev polynomial of the second kind.
\begin{lemma}\label{lemma:73} 
The even function $P_e(x,y)$ and the odd function $P_o(x,y)$ in $ y  / (1-y)(1-xy+y^2 )$ can be expressed as the Chebyshev polynomial of the second kind:
        {\allowdisplaybreaks
        \begin{align} \label{eq:7043}
        P_e(x,y) &= y(1+y) \sum_{n=0}^\infty U_n\left({x \over 2}\right)^2y^{2n},
         \intertext{and}
        P_o(x,y) &= (1+y) \sum_{n=1}^\infty U_n\left({x \over 2}\right) U_{n-1} \left({x \over 2}\right)y^{2n}.
           \end{align}
           }
\end{lemma}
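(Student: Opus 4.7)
The plan is to observe that $(1-xy+y^2)(1+xy+y^2) = (1+y^2)^2 - x^2 y^2 = 1+(2-x^2)y^2 + y^4$ is a polynomial in $w := y^2$. Dividing both identities in the statement by the common factor $1+y$ (and using $(1-y)(1+y)=1-y^2$), then substituting $w = y^2$, reduces the lemma to the two scalar identities
\begin{equation*}
\sum_{n=0}^\infty U_n(x/2)^2\, w^n = \frac{1+w}{(1-w)\bigl(1+(2-x^2)w+w^2\bigr)}
\end{equation*}
and
\begin{equation*}
\sum_{n=1}^\infty U_n(x/2)\, U_{n-1}(x/2)\, w^n = \frac{xw}{(1-w)\bigl(1+(2-x^2)w+w^2\bigr)}.
\end{equation*}

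Next, I would factor $1-xz+z^2 = (1-\alpha z)(1-\beta z)$, giving $\alpha+\beta=x$ and $\alpha\beta=1$. Partial fractions applied to the generating function $(\ref{eq:741})$ then yield the closed form $U_n(x/2) = (\alpha^{n+1}-\beta^{n+1})/(\alpha-\beta)$. Expanding the two target products and simplifying with $\alpha\beta=1$ gives
\begin{equation*}
U_n(x/2)^2 = \frac{\alpha^{2(n+1)} + \beta^{2(n+1)} - 2}{(\alpha-\beta)^2}, \qquad U_n(x/2)\, U_{n-1}(x/2) = \frac{\alpha^{2n+1} + \beta^{2n+1} - x}{(\alpha-\beta)^2}.
\end{equation*}
Each of the three terms in these numerators contributes a geometric series in $w$, whose natural common denominator is $(1-\alpha^2 w)(1-\beta^2 w)(1-w) = (1-w)\bigl(1+(2-x^2)w+w^2\bigr)$, since $\alpha^2+\beta^2 = x^2-2$ and $\alpha^2\beta^2 = 1$.

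Finally, I would combine the three fractions over this common denominator, using the symmetric-function identities $\alpha^2+\beta^2 = x^2-2$, $\alpha^3+\beta^3 = x^3-3x$, and $(\alpha-\beta)^2 = x^2-4$ to simplify the numerators. In both cases the numerator should collapse cleanly to a multiple of $x^2-4$, namely $(x^2-4)(1+w)$ for the first identity and $x(x^2-4)w$ for the second; this factor then cancels the $(\alpha-\beta)^2 = x^2-4$ sitting in the denominators of $U_n(x/2)^2$ and $U_n(x/2)U_{n-1}(x/2)$, producing exactly the claimed right-hand sides. The main obstacle is purely the algebraic bookkeeping of symmetric polynomials in $\alpha,\beta$; no subtle cancellation beyond these standard identities is needed.
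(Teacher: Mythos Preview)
Your approach is correct and genuinely different from the paper's. The paper does not compute anything directly: it invokes a Hadamard-product identity from an external reference \cite{JH}, observing that
\[
\frac{P_e(x,y)}{y(1+y)}=\frac{1}{1-xy^2+y^4}*\frac{1}{1-xy^2+y^4}
\qquad\text{and}\qquad
\frac{P_o(x,y)}{1+y}=\frac{1}{1-xy^2+y^4}*\frac{y^2}{1-xy^2+y^4},
\]
where $*$ is the Hadamard product in $y$; since $1/(1-xy^2+y^4)=\sum_n U_n(x/2)y^{2n}$, the conclusion is immediate. Your route instead reduces to the two closed-form generating functions in $w=y^2$ and proves them by the Binet-type formula $U_n(x/2)=(\alpha^{n+1}-\beta^{n+1})/(\alpha-\beta)$ with $\alpha+\beta=x$, $\alpha\beta=1$, summing three geometric series and clearing the factor $(\alpha-\beta)^2=x^2-4$. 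This is more elementary and fully self-contained (no citation needed), at the cost of a little explicit symmetric-function algebra; the paper's version is shorter on the page but outsources the actual identity. Both arguments ultimately hinge on the factorization $(1-xy+y^2)(1+xy+y^2)=1+(2-x^2)y^2+y^4$, which you make explicit and the paper hides inside the Hadamard-product lemma.
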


\begin{proof} Let $i=\sqrt{-1}$.
Substituting $iy^2$ for $x$ and
$-xi $ for $a$ and $b$ in equation ($7$) in \cite{JH} gives us that
        \begin{align*}
           {P_e(x,y) \over y(1+y)} &= {1 \over 1-xy^2+y^4} * {1 \over
       1-xy^2+y^4}  \\
            &= \sum_{n=0}^\infty U_n\left({x \over 2}\right)y^{2n} * \sum_{n=0}^\infty U_n \left({x \over 2}\right)y^{2n} \nonumber \\
            &= \sum_{n=0}^\infty U_n\left({x \over 2}\right)^2y^{2n}, \nonumber
        \end{align*}
where $*$ denotes the Hadamard product in $y$. So we have equation (\ref{eq:7043}).

Similarly, the function $P_o(x,y)/(1+y)$ can be
written as 
        \begin{align*}
           {P_o(x,y) \over (1+y)} &= {1 \over 1-xy^2+y^4} * {y^2 \over
       1-xy^2+y^4}  \\
            &= \sum_{n=0}^\infty U_n\left({x \over 2}\right)y^{2n} * \sum_{n=0}^\infty U_n \left({x \over 2}\right)y^{2n+2} \nonumber \\
            &= \sum_{n=1}^\infty U_n\left({x \over 2}\right) U_{n-1} \left({x \over 2}\right)y^{2n}, \nonumber           
        \end{align*}
where $*$ denotes the Hadamard product in $y$.
\end{proof}

By Lemma $\ref{lemma:73}$ and $q_n(x)=U_n(x/2)$, the even function $P_{e}$ of $x$ satisfies that
        \begin{align*}
           P_{e}(x,y) &= y(1+y)\sum_{n=0}^\infty q_n(x)^2 y^{2n } \\
            &= q_0(x)^2y+q_0(x)^2y^2 + q_1(x)^2y^3+q_1(x)^2y^4 + \cdots \\
            &= \sum_{n=1}^\infty q_{\lfloor (n-1)/2
            \rfloor}(x)^2y^n ,
        \end{align*}
        where $q_n(x)$ is the polynomial in (\ref{eq:741}) for all $n \ge 0$.
Also the odd function can be expressed as
\[ P_o(x,y) =\sum_{n=2}^\infty q_{\lfloor n/2 \rfloor}(x)q_{\lfloor n/2 \rfloor-1}(x) y^n .\]
So the expression $(\ref{eq:734})$ can be written as
        \begin{equation}\label{eq:7045}
           { y  \over (1-y)(1-xy+y^2 )}
           = q_0(x)^2y+ \sum_{n=1}^\infty \big(q_{\lfloor (n-1)/2
            \rfloor}(x)^2 + q_{\lfloor n/2 \rfloor}(x)q_{\lfloor n/2 \rfloor-1}(x)\big)y^n .
        \end{equation} 
Therefore, equating coefficients in (\ref{eq:7045}) gives 
$r_1(x+1)=q_0(x)^2$ and 
\[r_n(x+1) = q_{\lfloor (n-1)/2
            \rfloor}(x)^2 + q_{\lfloor n/2 \rfloor}(x)q_{\lfloor n/2 \rfloor-1}(x), \textrm{ when } n \ge 2.\]
That is, for all $m>0$,
        \begin{align*}
           r_{2m}(x+1) &= q_{m-1}(x) \big( q_{m-1}(x)+q_{m}(x) \big),
           \intertext{and} 
           r_{2m+1}(x+1) &= q_{m}(x) \big( q_{m-1}(x)+q_{m}(x) \big).
        \end{align*}


%


\end{document}